\documentclass[a4paper]{amsart}
\usepackage{color,graphicx}
\usepackage{amssymb}
\usepackage{amsmath}
\usepackage{latexsym}
\usepackage{amsthm}
\usepackage{autograph,eepic,epic,latexsym,bezier,amsbsy,color,
enumerate,amsfonts,amsmath,amscd,amssymb}
\usepackage[latin1]{inputenc}

\unitlength=0,4mm

\textwidth = 16.00cm
\textheight = 22.00cm
\oddsidemargin= 0.12in
\evensidemargin = 0.12in
\setlength{\parindent}{8pt}
\setlength{\parskip}{5pt plus 2pt minus 1pt}
\setloopdiam{7}\setprofcurve{7}

\newtheorem{theorem}{Theorem}

\newtheorem{proposition}[theorem]{Proposition}

\theoremstyle{definition}

\newtheorem{definition}[theorem]{Definition}
\newtheorem{example}[theorem]{Example}

\newcommand{\ad}{{\operatorname{ad}\,}}
\newcommand{\K}{\mathbb{K}}

\theoremstyle{remark}
\newtheorem{remark}[theorem]{Remark}

\begin{document}

\title{Posetted trees and Baker-Campbell-Hausdorff product}

\author{Donatella Iacono}
\address{\newline  Universit\`a degli Studi di Bari,
\newline Dipartimento di Matematica,
\hfill\newline Via E. Orabona 4,
I-70125 Bari, Italy.}
\email{iacono@dm.uniba.it}
\urladdr{www.dm.uniba.it/~iacono/}

\author{Marco Manetti}
\address{\newline 
Universit\`a degli studi di Roma ``La Sapienza'',\hfill\newline
Dipartimento di Matematica \lq\lq Guido
Castelnuovo\rq\rq,\hfill\newline
P.le Aldo Moro 5,
I-00185 Roma, Italy.}
\email{manetti@mat.uniroma1.it}
\urladdr{www.mat.uniroma1.it/people/manetti/}

\begin{abstract}
We introduce the combinatorial notion of posetted trees 
and we use it in order to write an explicit expression of 
the Baker-Campbell-Hausdorff formula.
\end{abstract}

\subjclass[2010]{05C05,17B01}

\keywords{Rooted trees, posets, Lie algebras}

\maketitle

\section{Introduction}

If $a,b$ are continuous operators on a Hilbert space, we may write
\[ e^ae^b=e^{a\bullet b},\qquad a\bullet b=a+b+\sum_{n=2}^{\infty}w_n(a,b),\]
where $w_n$ is a universal, non commutative, homogeneous polynomial  of degree $n$ with rational coefficients.
The product $\bullet$ is called, after \cite{Baker,Campbell,Hausdorff}, Baker-Campbell-Hausdorff (BCH) product: it is  associative and the BCH theorem asserts that every polynomial $w_n$ is a Lie element, i.e., is a linear combination of nested commutators. However, the proof of the BCH theorem does not give directly an explicit description of $w_n$ as a Lie element; 
moreover, such description is not unique in view of the Jacobi identity.

The most famous explicit expression of $a \bullet b$, in terms of nested commutators, is probably the one due to E. Dynkin
(see \cite[Equation 1]{Dynkin} or \cite[Equation 1.7.3]{DK}):
\begin{equation}\label{equ.dynkinformula}
a\bullet b=\sum_{n>0}
\frac{(-1)^{n-1}}{n}  \sum \frac{1}{p_{1}!q_{1}!\ldots p_{n}!q_{n}!}ad(a)^{p_{1}}ad(b)^{q_{1}}\ldots ad(a)^{p_{n}}ad(b)^{q_{n}-1}b,
\end{equation}
where $ad(x)=[x,-]$ is the adjoint operator, the second sum is over all possible combinations of $p_1, q_1, \ldots , p_k, q_k \in \mathbb{N}$ such that $p_i +q_i  > 0$, for $i = 1, \ldots ,k$, and $\sum_{i=1}^k (p_i +q_i)= n$.\par

The literature about Baker-Campbell-Hausdorff formula is huge. For instance: in 1998,  V. Kathotia \cite{kathotia} derived a trees summation expression for the BCH product over the real numbers
using M. Kontsevich's universal formula for deformation quantization of Poisson manifolds; 
the coefficientf of this formula are certain integrals on configuration spaces and it is 
still unknown if they are rational numbers.  
In the papers \cite{FioMan} and \cite{FiMaMa}, the authors recognize  the equation $a\bullet b\bullet c=0$ as the
Maurer-Cartan equation of the canonical $L_{\infty}$ structure on  the conormalized 
complex of  singular cochains, on the standard two dimensional simplex with values in a 
Lie algebra. Therefore, the possibility of an explicit description of  $a\bullet b$,
again as a trees  summation formula, by using the standard tools of homological perturbation 
and homotopy transfer theory \cite{LodayVallette}. The reader may also consult \cite{WS} for a 
list of explicit and recursive formulas.

The aim of this paper is to give a simple and elementary combinatorial description 
of the polynomial $w_n$ that uses some notions about planar rooted trees. The necessary 
combinatorial background is summarized in Sections \ref{sec.subroot} and \ref{sec.posetted}.
 In particular, for every finite planar rooted tree $\Gamma$, the set of its leaves admits
  a total ordering  (from left to right) and also a partial ordering $\preceq$, which takes
 care of the position of leaves with respect to the subroots.  Then, we define a posetted
 tree as a finite planar rooted tree, whose leaves are labelled by elements on a partially 
ordered set (a poset), monotonically with respect to $\preceq$.

Our main result (Theorem~\ref{thm.bchtrees})
gives an explicit description of every $w_n$ as a linear combination with rational 
coefficients of nested commutators, indexed by a certain set of posetted trees with $n$ 
leaves. The formula of the coefficients involves the Bernoulli numbers and is completely
 described in terms of the combinatorial data of   posetted trees.

\section{Subroots of planar rooted trees}
\label{sec.subroot}

This section is devoted to introduce the notion, already known in the parallel logic 
programming community
\cite{informatica}, of subroots of a planar rooted trees.

Recall that a tree is called a \emph{rooted tree} if one vertex has been designated 
the \emph{root}. Every rooted tree has a natural structure of directed tree such that,
for every vertex $u$, there exists a unique directed path from $u$ to the root.
 We shall write $u\to v$ if the vertex $v$ belongs to  the directed path from $u$ to the root.
A \emph{leaf} is a vertex without incoming edges: equivalently, a vertex $u$ is a leaf if
 the relation $v\to u$ implies $u=v$. A vertex is called \emph{internal} if it is not a leaf;
 notice that, if a rooted  tree has at least two vertices, then the root is an internal 
vertex.

\begin{center}
\begin{figure}[h]
\begin{picture}(200,70)
\unitlength=0.20mm

\letvertex A=(200,125)\letvertex B=(110,70)
\letvertex C=(95,40)\letvertex D=(80,10)
\letvertex E=(110,10) \letvertex F=(140,10)

\letvertex J=(185,10) \letvertex K=(215,10)

\letvertex H=(245,10)\letvertex L=(275,10)\letvertex M=(300,10)
\letvertex N=(330,10)
\letvertex O=(315,35) \letvertex P=(260,35) \letvertex Q=(290,70)
\letvertex Z=(260,10) \letvertex T=(360,10)

\drawedge(T,Q){}
\drawedge(Z,P){} \drawedge(J,A){} \drawedge(K,A){}
\drawedge(B,A){} \drawedge(E,C){}
\drawedge(F,B){}
\drawedge(Q,A){} \drawedge(C,B){}
\drawedge(D,C){} \drawedge(E,C){}

\drawedge(F,B){} \drawedge(P,Q){}
\drawedge(H,P){}\drawedge(L,P){}
\drawedge(M,O){}\drawedge(O,Q){} \drawedge(N,O){}
\drawvertex(A){$\circ$}\drawvertex(B){$\circ$}
\drawvertex(C){$\circ$} \drawvertex(P){$\circ$}

\drawvertex(D){$\circ$}

\drawvertex(E){$\circ$}\drawvertex(F){$\circ$}
\drawvertex(N){$\circ$}
\drawvertex(L){$\circ$}
 \drawvertex(M){$\circ$}
\drawvertex(H){$\circ$}\drawvertex(O){$\circ$}

\drawvertex(P){$\circ$}
\drawvertex(Q){$\circ$}
\drawvertex(Z){$\circ$} \drawvertex(J){$\circ$} \drawvertex(K){$\circ$}
\drawvertex(T){$\circ$}
\put(277,0){$\scriptstyle{v}$}
\put(295,70){$\scriptstyle{u}$}
\put(195,135){$\small\textit{root}$}

\end{picture}
\caption{A rooted tree, with  $v \to u$.
}
\end{figure}
\end{center}

From now on, we consider only planar rooted trees;  following \cite{operads}, we denote by
$\mathcal{T}$ the set of finite planar rooted trees with the root at the top and  
the  leaves at the bottom (i.e., every directed path moves upward),  and such that 
every internal  vertex has at least two incoming edges.

We also write
\[
\mathcal{T}=\bigcup_{n>0} \mathcal{T}_n,\;\]
where   $\mathcal{T}_n$ is the set of planar  rooted trees with $n$ leaves
and, for every $\Gamma\in \mathcal{T}$, we denote by
$L(\Gamma)$ the set of leaves of $\Gamma$.
The planarity of the tree gives, for every internal vertex $v$, a total ordering of the edges
ending in $v$, from the leftmost to the rightmost (see Figure~\ref{fig.orientazione}).

\begin{center}
\begin{figure}[h]
\begin{picture}(250,75)
\unitlength=0.20mm

\letvertex A=(240,125)\letvertex B=(110,70)
\letvertex C=(95,40)\letvertex D=(80,10)
\letvertex E=(110,10) \letvertex F=(140,10)
\letvertex G=(176,10)
\letvertex H=(200,10)\letvertex L=(230,10)\letvertex M=(245,10)
\letvertex N=(275,10) \letvertex O=(260,35) \letvertex P=(215,35) 
\letvertex Q=(237,71) \letvertex R=(240,95)

\letvertex a=(390,70) \letvertex b=(360,10) \letvertex c=(420,10)

\letvertex Z=(320,10) \letvertex J=(95,10)

\put(222,127){$\scriptstyle{v}$}
\put(78,-5){$\scriptstyle{1}$}
\put(93,-5){$\scriptstyle{2}$}
\put(108,-5){$\scriptstyle{3}$}
\put(138,-5){$\scriptstyle{4}$}
\put(174,-5){$\scriptstyle{5}$}
\put(198,-5){$\scriptstyle{6}$}
\put(228,-5){$\scriptstyle{7}$}
\put(243,-5){$\scriptstyle{8}$}
\put(273,-5){$\scriptstyle{9}$}
\put(318,-5){$\scriptstyle{10}$}
\put(358,-5){$\scriptstyle{11}$}
\put(418,-5){$\scriptstyle{12}$}

\drawundirectededge(a,A){} \drawundirectededge(a,b){}

\drawundirectededge(a,c){}

\drawundirectededge(Z,R){} \drawundirectededge(C,J){}

\drawundirectededge(A,B){} \drawundirectededge(E,C){}
\drawundirectededge(B,F){} \drawundirectededge(R,G){}
 \drawundirectededge(E,C){}
\drawundirectededge(B,F){} \drawundirectededge(R,G){}
\drawundirectededge(P,L){}
\drawundirectededge(O,M){}\drawundirectededge(A,R){}
\drawundirectededge(Q,R){}
\drawundirectededge(Q,O){}
\drawundirectededge(O,N){}
\drawundirectededge(P,H){} \drawundirectededge(Q,P){}

\drawundirectededge(C,D){}
\drawundirectededge(B,C){}

\drawvertex(A){$\circ$}\drawvertex(B){$\circ$}
\drawvertex(C){$\circ$} \drawvertex(P){$\circ$}

\drawvertex(D){$\circ$}

\drawvertex(a){$\circ$}
\drawvertex(b){$\circ$}
\drawvertex(c){$\circ$}

\drawvertex(E){$\circ$}\drawvertex(F){$\circ$}
\drawvertex(N){$\circ$}
\drawvertex(L){$\circ$}

\drawvertex(G){$\circ$}\drawvertex(M){$\circ$}
\drawvertex(H){$\circ$}\drawvertex(O){$\circ$}

\drawvertex(P){$\circ$}\drawvertex(R){$\circ$}
\drawvertex(Q){$\circ$}

\drawvertex(Z){$\circ$}
\drawvertex(J){$\circ$}
\end{picture}
\caption{An element of $\mathcal{T}_{12}$.}\label{fig.orientazione}
\end{figure}
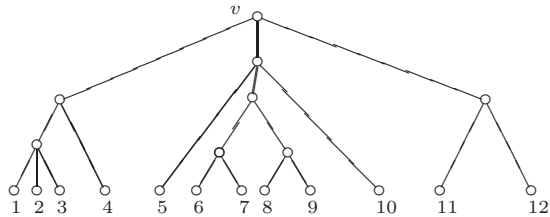
\end{center}
\begin{definition}
A \emph{rightmost branch} of a planar rooted tree  $\Gamma\in \mathcal{T}$ is a maximal connected
subgraph $\Omega\subset \Gamma$, with the property that every edge of $\Omega$ is a rightmost edge of
$\Gamma$.
A rightmost branch is called non trivial if it has at least two vertices.
\end{definition}

\begin{center}
\begin{figure}[h]

\begin{picture}(200,70)
\unitlength=0.20mm

\letvertex A=(200,125)
\letvertex B=(110,70)
\letvertex C=(95,40)
\letvertex D=(80,10) \letvertex E=(110,10) \letvertex F=(140,10)

\letvertex J=(185,10) \letvertex K=(215,10)

\letvertex H=(245,10)\letvertex L=(275,10)\letvertex M=(300,10)
\letvertex N=(330,10)
\letvertex O=(315,35) \letvertex P=(260,35) \letvertex Q=(290,70)
\letvertex Z=(260,10) \letvertex T=(360,10)

\drawundirectededge(Z,P){}

\drawundirectededge(M,O){}
\dashline[0]{2}(274,13)(262,34)
\dashline[0]{2}(329,13)(316,34)

\drawundirectededge(J,A){}

\dashline[0]{2}(358,12)(291,69)
\dashline[0]{2}(202,123)(287,72)

\drawundirectededge(K,A){}

\drawundirectededge(B,A){}


\dashline[0]{2}(109,13)(96,39)
\dashline[0]{2}(139,13)(111,69)

\drawundirectededge(C,B){}
\drawundirectededge(D,C){}

\drawundirectededge(P,Q){}
\drawundirectededge(H,P){}
\drawundirectededge(O,Q){}
\drawvertex(A){$\circ$}\drawvertex(B){$\circ$}
\drawvertex(C){$\circ$} \drawvertex(P){$\circ$}

\drawvertex(D){$\circ$}

\drawvertex(E){$\circ$}\drawvertex(F){$\circ$}
\drawvertex(N){$\circ$}
\drawvertex(L){$\circ$}
 \drawvertex(M){$\circ$}
\drawvertex(H){$\circ$}\drawvertex(O){$\circ$}

\drawvertex(P){$\circ$}
\drawvertex(Q){$\circ$}
\drawvertex(Z){$\circ$} \drawvertex(J){$\circ$} \drawvertex(K){$\circ$}
\drawvertex(T){$\circ$}
\end{picture}
\caption{An element of $\mathcal{T}_{11}$. The dashed lines denote the rightmost edges.}\label{fig.ramidestritratteggiati}
\end{figure}
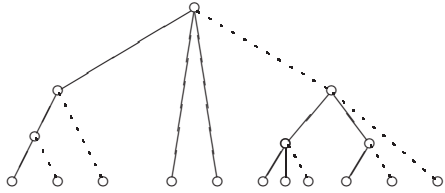
\end{center}

\begin{definition}\label{def.rigthmost m(v) d(v)}
A \emph{local rightmost leaf} is a  leaf lying on a non trivial rightmost branch.  
Given an internal vertex  $v$, we call $m(v)$ the leaf lying on the rightmost
branch containing $v$.
We also denote by $d(v)$ the distance between $v$ and $m(v)$, as defined in \cite{ore}.
\end{definition}
\begin{definition}
A \emph{subroot}  is the vertex  of a  non trivial rightmost branch which is nearest to the root. The set of subroots of a finite planar rooted tree $\Gamma$ will be denoted by $R(\Gamma)$.

\end{definition}

Therefore, we have the natural  bijections
\[ \{\text{ subroots }\}\cong\{\text{ non trivial rightmost branches }\}
\cong\{\text{ local rightmost leaves }\}.\]

\begin{example}
In the  tree of Figure~\ref{fig.esempiomassimilocali},
 the subroots are the vertices $r,a,c$ and $e$; the rightmost leaves are the leaves $2,3,5$ and $7$. Moreover, $m(a)=3,  m(c)=2, m(e)=5$ and $ m(r)=m(b)=m(f)=7$; and $d(r)= 3$,  $d(b)=2$ and $d(a)= d(c)= d(e)= d(f)= 1$.
\end{example}

\begin{center}
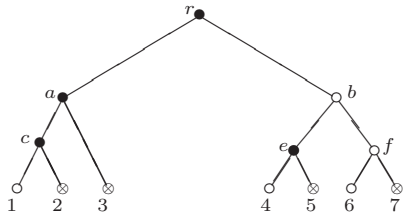
\begin{figure}[h]
\begin{picture}(200,70)
\unitlength=0.20mm

\letvertex A=(200,125)\letvertex B=(110,70)
\letvertex C=(95,40)\letvertex D=(80,10)
\letvertex E=(110,10) \letvertex F=(140,10)

\letvertex H=(246,10)\letvertex L=(275,10)\letvertex M=(300,10)
\letvertex N=(330,10)
\letvertex O=(315,35) \letvertex P=(262,35) \letvertex Q=(290,70)

\put(190,126){$\scriptstyle{r}$}
\put(98,70){$\scriptstyle{a}$}  \put(297,70){$\scriptstyle{b}$}
\put(82,40){$\scriptstyle{c}$} \put(320,35){$\scriptstyle{f}$}  \put(252,35){$\scriptstyle{e}$}

\put(73,-5){$\scriptstyle{1}$} \put(103,-5){$\scriptstyle{2}$}  \put(133,-5){$\scriptstyle{3}$} \put(240,-5){$\scriptstyle{4}$}
 \put(270,-5){$\scriptstyle{5}$} \put(295,-5){$\scriptstyle{6}$} \put(325,-5){$\scriptstyle{7}$}

\drawundirectededge(B,A){} \drawundirectededge(E,C){}
\drawundirectededge(F,B){}
\drawundirectededge(Q,A){} \drawundirectededge(C,B){}
\drawundirectededge(D,C){} \drawundirectededge(E,C){}

\drawundirectededge(F,B){} \drawundirectededge(P,Q){}
\drawundirectededge(H,P){}
\drawundirectededge(L,P){}
\drawundirectededge(M,O){}\drawundirectededge(O,Q){}
\drawundirectededge(N,O){}
\drawvertex(A){$\bullet$}\drawvertex(B){$\bullet$}
\drawvertex(C){$\bullet$} \drawvertex(P){$\bullet$}

\drawvertex(D){$\circ$}

\drawvertex(E){$\scriptscriptstyle{\otimes}$}\drawvertex(F){$\scriptscriptstyle{\otimes}$}
\drawvertex(N){$\scriptscriptstyle{\otimes}$}
\drawvertex(L){$\scriptscriptstyle{\otimes}$}
 \drawvertex(M){$\circ$}
\drawvertex(H){$\circ$}\drawvertex(O){$\circ$}

\drawvertex(P){$\circ$}
\drawvertex(Q){$\circ$}
\end{picture}
\caption{The subroots are denoted by $\bullet$, while the local rightmost leaves by 
 $ \stackrel{\scriptscriptstyle{\otimes}}{ }$.}\label{fig.esempiomassimilocali}
\end{figure}
\end{center}

A  planar rooted tree $\Gamma\in\mathcal{T}$ is a \emph{binary tree} if every internal vertex has exactly two incoming edges. We use the notation
\[
\mathcal{B}=\bigcup_{n>0} \mathcal{B}_n \subset \mathcal{T},\;
\]
where
$\mathcal{B}_n$ is the set of planar binary rooted trees with $n$ leaves.
Using the  notion introduced above, it is very easy to see that a tree $\Gamma  \in \mathcal{T}_n$ 
is a binary tree if and only if it satisfies the equality:
\[\sum_{v \in R(\Gamma)}d(v)=n-1.\]

Let  $R$  be a (non associative) algebra  over a field $\K$ and 
 $\Gamma\in \mathcal{B}$ a planar rooted tree. Labelling the leaves of $\Gamma$ 
with elements of $R$, we can associate  the product element in $R$  obtained  
by the usual operadic rules \cite{LodayVallette,operads}, i.e.,  we perform the product of $R$ 
at every internal vertex in the order arising from the planar structure of the directed tree.  
For instance, the following labelled tree

\begin{center}
\begin{picture}(200,65)
\unitlength=0.20mm

\letvertex A=(200,125)\letvertex B=(110,70)
\letvertex C=(95,40)\letvertex D=(80,10)
\letvertex E=(110,10) \letvertex F=(140,10)

\letvertex H=(245,10)\letvertex L=(275,10)\letvertex M=(300,10)
\letvertex N=(330,10)
\letvertex O=(315,35) \letvertex P=(262,35) \letvertex Q=(290,70)

\put(73,-5){$\scriptstyle{r_1}$} \put(103,-5){$\scriptstyle{r_2}$}  \put(133,-5){$\scriptstyle{r_3}$} \put(240,-5){$\scriptstyle{r_4}$}
 \put(270,-5){$\scriptstyle{r_5}$} \put(295,-5){$\scriptstyle{r_6}$} \put(325,-5){$\scriptstyle{r_7}$}

\drawundirectededge(B,A){} \drawundirectededge(E,C){}
\drawundirectededge(F,B){}
\drawundirectededge(Q,A){} \drawundirectededge(C,B){}
\drawundirectededge(D,C){} \drawundirectededge(E,C){}

\drawundirectededge(F,B){} \drawundirectededge(P,Q){}
\drawundirectededge(H,P){}\drawundirectededge(L,P){}
\drawundirectededge(M,O){}\drawundirectededge(O,Q){}
\drawundirectededge(N,O){}
\drawvertex(A){$\bullet$}\drawvertex(B){$\bullet$}
\drawvertex(C){$\bullet$} \drawvertex(P){$\bullet$}

\drawvertex(D){$\circ$}

\drawvertex(E){$\scriptscriptstyle{\otimes}$}\drawvertex(F){$\scriptscriptstyle{\otimes}$}
\drawvertex(N){$\scriptscriptstyle{\otimes}$}
\drawvertex(L){$\scriptscriptstyle{\otimes}$}
 \drawvertex(M){$\circ$}
\drawvertex(H){$\circ$}\drawvertex(O){$\circ$}

\drawvertex(P){$\circ$}
\drawvertex(Q){$\circ$}
\end{picture}
\end{center}

gives the  product $((r_1 r_2)r_3) ((r_4 r_5 )(r_6 r_7)) \in R$.

Given any  map $f: L(\Gamma) \to R$ (the labelling), we denote by $Z_{\Gamma}(f) \in R$ the corresponding product element.

If $S\subset R$, then the elements $Z_{\Gamma}(f)$, with 
$\Gamma\in\mathcal{B}$ and $f\colon L(\Gamma)\to S$,  are a 
set of generators of the subalgebra generated by $S$. If $R$ is either 
symmetric or skewsymmetric (e.g., a Lie algebra), then we may reduce the
 set of generators by a suitable choice of the labelling. Keeping in mind 
our main application (the BCH product), a possible way of doing 
that is by introducing the combinatorial notion of posetted trees.

\section{Posetted trees}
\label{sec.posetted}

Using the notion of subroot, we can define a partial order $ \preceq $ on the set of leaves $L(\Gamma)$.

\begin{definition}
Given two leaves $l_1$ and $l_2$ in a tree $\Gamma\in\mathcal{T}$, we say $l_1 \preceq l_2$ if $l_1=l_2$ or there exists a subroot $v \in R(\Gamma)$ such that $l_2=m(v)$ and  $l_1 \to v$.
\end{definition}

\begin{center}
\begin{figure}[h]
\begin{picture}
(200,60)
\unitlength=0.20mm

\letvertex B=(200,110)
\letvertex F=(115,10) \letvertex T=(285,10)

\letvertex A=(200,65)

\letvertex C=(170,30)  \letvertex E=(230,30)
\letvertex D=(155,10)  \letvertex G=(245,10)
\letvertex N=(185,10)  \letvertex L=(215,10)

\drawundirectededge(A,B){}
\drawundirectededge(T,B){}
\drawundirectededge(C,D){} \drawundirectededge(E,G){}
   \drawundirectededge(L,E){}
\drawundirectededge(B,F){} \drawundirectededge(C,N){}
\drawundirectededge(A,C){} \drawundirectededge(A,E){}

\drawvertex(T){$\scriptscriptstyle{\otimes}$}
\drawvertex(A){$\bullet$}\drawvertex(B){$\bullet$}
\drawvertex(C){$\bullet$}
 \drawvertex(D){$\circ$}
\drawvertex(E){$\circ$}\drawvertex(F){$\circ$}
\drawvertex(G){$\scriptscriptstyle{\otimes}$}
\drawvertex(N){$\scriptscriptstyle{\otimes}$}\drawvertex(L){$\circ$}

\put(103,-5){$\scriptstyle{l_1}$} \put(153,-5){$\scriptstyle{l_2}$}  
\put(183,-5){$\scriptstyle{l_3}$} \put(213,-5){$\scriptstyle{l_4}$}
 \put(243,-5){$\scriptstyle{l_5}$} \put(288,-5){$\scriptstyle{l_6}$}

\end{picture}
\caption{Here, we have  $l_1\preceq l_6$,   $l_2 \preceq l_3 \preceq l_5 \preceq l_6$ and  $l_4 \preceq l_5 $.} \label{fig.ordineFoglie}
\end{figure}
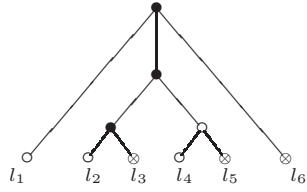
\end{center}

\begin{definition} For every
poset $(A ,\leq)$, we denote
\[
 \mathcal{T}(A)=\{ (\Gamma, f) \, | \, \Gamma \in \mathcal{T}, \, f:(L(\Gamma), \preceq) \to (A ,\leq),
 f \mbox{ monotone}\}
\]
In a similar way, we define $\mathcal{B}(A)$, and, for every $n>0$, $\mathcal{T}_n(A)$ and $\mathcal{B}_n(A)$.
\end{definition}

We  call \emph{posetted trees} the elements of $\mathcal{T}(A)$.

\begin{example}\label{example. B(a < b)}
The sets $\mathcal{B}_1(b\le a)$, $\mathcal{B}_2(b\le a)$ and $\mathcal{B}_3(b\le a)$ 
 contain 2, 3 and 8 posetted trees, respectively  (see Figures \ref{fig.b2ab} and \ref{fig.b3ab}).
\end{example}

\begin{center}

\begin{figure}[h]

\begin{picture}(240,25)
\unitlength=0.30mm
\letvertex A=(120,35)\letvertex B=(105,10)
\letvertex F=(135,10)
\letvertex p=(200,35)\letvertex q=(185,10)
\letvertex r=(215,10)
\letvertex a=(280,35)\letvertex b=(265,10)
\letvertex f=(295,10)

\drawundirectededge(A,B){}\drawundirectededge(F,A){}

\put(10,0){$\scriptstyle{a}$}
\put(50,0){$\scriptstyle{b}$}
\put(10,10){$\circ$}
\put(50,10){$\circ$}

\put(103,0){$\scriptstyle{a}$}
\put(135,0){$\scriptstyle{a}$}
\put(183,0){$\scriptstyle{b}$}
\put(215,0){$\scriptstyle{b}$}
\put(263,0){$\scriptstyle{b}$}
\put(295,0){$\scriptstyle{a}$}


\drawundirectededge(f,a){}\drawundirectededge(b,a){}
\drawundirectededge(r,p){}\drawundirectededge(q,p){}

\drawvertex(a){$\bullet$}
\drawvertex(b){$\circ$}
\drawvertex(f){$\scriptscriptstyle{\otimes}$}
\drawvertex(p){$\bullet$}
\drawvertex(q){$\circ$}
\drawvertex(r){$\scriptscriptstyle{\otimes}$}
\drawvertex(A){$\bullet$}\drawvertex(B){$\circ$}
\drawvertex(F){$\scriptscriptstyle{\otimes}$}
\end{picture}
\caption{ The 5 posetted trees of $\mathcal{B}_i(b\le a)$, $i=1,2$.}\label{fig.b2ab}
\end{figure}
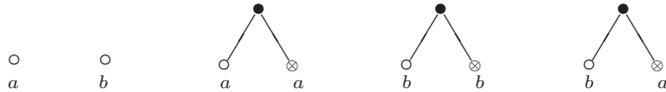
\end{center}

\begin{center}
\begin{figure}[h]
\begin{picture}(200,30)
\unitlength=0.23mm
\letvertex Aa=(-20,60)\letvertex Ba=(-35,35)
\letvertex Ca=(-50,10)\letvertex Da=(-20,10)
\letvertex Ea=(10,10)\letvertex Fa=(-5,35)

\letvertex A=(90,60)\letvertex B=(75,35)
\letvertex C=(60,10)\letvertex D=(90,10)
\letvertex E=(120,10)\letvertex F=(105,35)

\letvertex a=(200,60)\letvertex b=(185,35)
\letvertex c=(170,10)\letvertex d=(200,10)
\letvertex e=(230,10)\letvertex f=(215,35)

\letvertex AA=(310,60)\letvertex BB=(295,35)
\letvertex CC=(280,10)\letvertex DD=(310,10)
\letvertex EE=(340,10)\letvertex FF=(325,35)

\letvertex aA=(420,60)\letvertex aB=(405,35)
\letvertex aC=(390,10)\letvertex aD=(420,10)
\letvertex aE=(450,10)\letvertex aF=(435,35)

\drawundirectededge(A,F){} \drawundirectededge(F,E){}  \drawundirectededge(F,D){}
\drawundirectededge(A,C){}

\drawundirectededge(Aa,Fa){} \drawundirectededge(Fa,Ea){}  \drawundirectededge(Fa,Da){}
\drawundirectededge(Aa,Ca){}

\drawundirectededge(a,f){}\drawundirectededge(f,e){}
\drawundirectededge(a,c){}\drawundirectededge(f,d){}


\drawundirectededge(AA,CC){}\drawundirectededge(AA,EE){}
\drawundirectededge(BB,DD){}

\drawundirectededge(aA,aC){}\drawundirectededge(aA,aE){}

\drawundirectededge(aB,aD){}

\drawvertex(a){$\bullet$}
\drawvertex(c){$\circ$}\drawvertex(d){$\circ$}
\drawvertex(e){$\scriptscriptstyle{\otimes}$}\drawvertex(f){$\circ$}

\drawvertex(A){$\bullet$}
\drawvertex(C){$\circ$}\drawvertex(D){$\circ$}
\drawvertex(E){$\scriptscriptstyle{\otimes}$}\drawvertex(F){$\circ$}

\drawvertex(Aa){$\bullet$}
\drawvertex(Ca){$\circ$}\drawvertex(Da){$\circ$}
\drawvertex(Ea){$\scriptscriptstyle{\otimes}$}\drawvertex(Fa){$\circ$}

\drawvertex(AA){$\bullet$}\drawvertex(BB){$\bullet$}
\drawvertex(CC){$\circ$}\drawvertex(DD){$\scriptscriptstyle{\otimes}$}
\drawvertex(EE){$\scriptscriptstyle{\otimes}$}

\drawvertex(aA){$\bullet$}\drawvertex(aB){$\bullet$}
\drawvertex(aC){$\circ$}\drawvertex(aD){$\scriptscriptstyle{\otimes}$}
\drawvertex(aE){$\scriptscriptstyle{\otimes}$}

\put(118,0){$\scriptstyle{a}$}\put(58,0){$\scriptstyle{a}$}
\put(88,0){$\scriptstyle{a}$}

\put(-52,0){$\scriptstyle{b}$}\put(8,0){$\scriptstyle{b}$}
\put(-22,0){$\scriptstyle{b}$}

\put(388,0){$\scriptstyle{b}$}\put(448,0){$\scriptstyle{b}$}
\put(418,0){$\scriptstyle{b}$}

\put(168,0){$\scriptstyle{b}$}\put(198,0){$\scriptstyle{a}$}
\put(228,0){$\scriptstyle{a}$}
\put(308,0){$\scriptstyle{a}$}\put(338,0){$\scriptstyle{a}$}
\put(278,0){$\scriptstyle{a}$}

\end{picture}

\begin{picture}(200,50)
\unitlength=0.23mm
\letvertex A=(60,60)\letvertex B=(45,35)
\letvertex C=(30,10)\letvertex D=(60,10)
\letvertex E=(90,10)\letvertex F=(75,35)

\letvertex a=(200,60)\letvertex b=(185,35)
\letvertex c=(170,10)\letvertex d=(200,10)
\letvertex e=(230,10)\letvertex f=(215,35)

\letvertex AA=(340,60)\letvertex BB=(325,35)
\letvertex CC=(310,10)\letvertex DD=(340,10)
\letvertex EE=(370,10)\letvertex FF=(355,35)

\drawundirectededge(A,F){} \drawundirectededge(F,E){}  \drawundirectededge(F,D){}
\drawundirectededge(A,C){}

\drawundirectededge(a,f){}\drawundirectededge(f,e){}
\drawundirectededge(a,c){}\drawundirectededge(f,d){}


\drawundirectededge(AA,CC){}\drawundirectededge(AA,EE){}
\drawundirectededge(BB,DD){}

\drawvertex(a){$\bullet$}
\drawvertex(c){$\circ$}\drawvertex(d){$\circ$}
\drawvertex(e){$\scriptscriptstyle{\otimes}$}\drawvertex(f){$\circ$}

\drawvertex(A){$\bullet$}
\drawvertex(C){$\circ$}\drawvertex(D){$\circ$}
\drawvertex(E){$\scriptscriptstyle{\otimes}$}\drawvertex(F){$\circ$}

\drawvertex(AA){$\bullet$}\drawvertex(BB){$\bullet$}
\drawvertex(CC){$\circ$}\drawvertex(DD){$\scriptscriptstyle{\otimes}$}
\drawvertex(EE){$\scriptscriptstyle{\otimes}$}

\put(28,0){$\scriptstyle{a}$}\put(58,0){$\scriptstyle{b}$}
\put(88,0){$\scriptstyle{a}$}

\put(168,0){$\scriptstyle{b}$}\put(198,0){$\scriptstyle{b}$}
\put(228,0){$\scriptstyle{a}$}
\put(308,0){$\scriptstyle{b}$}\put(338,0){$\scriptstyle{a}$}
\put(368,0){$\scriptstyle{a}$}

\end{picture}
\caption{ The 8 posetted trees of $\mathcal{B}_3(b\le a)$.}\label{fig.b3ab}
\end{figure}
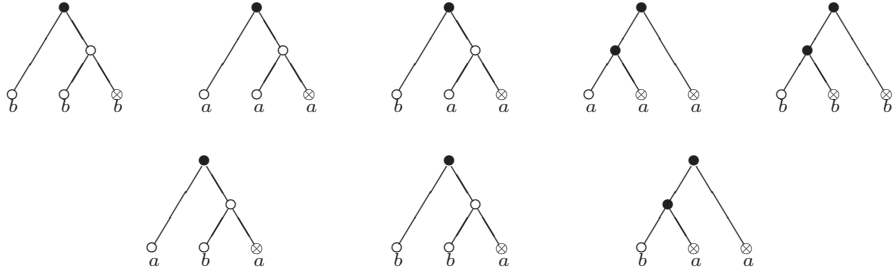
\end{center}

\begin{remark} If $A=\{1,\ldots,m\}$ with the usual order, then there exists a natural inclusion of $\mathcal{T}_n(A)$ into the set  of admissible graphs with $n$ vertices of the first kind and $m$ vertices of the second kind considered in \cite{kathotia,Konts} . 
\end{remark}

Assume that $A$ is a subset of a (skew)commutative algebra $R$ and choose a total ordering  on $A$. Then, it is easy to see that the elements $Z_{\Gamma}(f)$, with $(\Gamma,f)\in \mathcal{B}(A)$ generate, as a $\K$ vector space, the subalgebra generated by $A$.

\section{An expression of the Baker-Campbell-Hausdorff product in terms of posetted trees}

Let $L$ be a Lie algebra over a field $\K$ of characteristic $0$, which is complete
 with respect to its lower descending series $L^1=L$,  $L^{n+1}=[L^n,L]$. Denote
 by $\bullet\colon L\times L\to
L$ the Baker-Campbell-Hausdorff (BCH) product,  obtained formally by the formula
$a\bullet b=\log(e^ae^b)$. It is well known that
\[ a \bullet b  =a+b +
\displaystyle\frac{1}{2}[a,b]+\frac{1}{12} [a,[a,b]]-\frac{1}{12}
[b,[b,a]]+ \cdots,\]
is an element of the Lie subalgebra generated by $a$ and $b$ and, then, it can be expressed as an infinite sum
\[ a\bullet b=\sum_{(\Gamma,f)\in\mathcal{B}(b\le a)} s_{(\Gamma,f)}Z_{\Gamma}(f),\]
for a sequence  $s_{(\Gamma,f)}\in\K$. Clearly, in view of the alternating properties of the product and Jacobi identity,  such a sequence is not unique. The Dynkin Formula \eqref{equ.dynkinformula} provides  a  sequence as above where $s_{(\Gamma,f)}=0$, whenever  $\Gamma$ has at least 2 subroots: 
on the other hand, the explicit expression of the nonvanishing $s_{(\Gamma,f)}$ is rather complicated.

Here, we describe another sequence $b_{(\Gamma,f)}$ of rational numbers with the 
above properties. 
First of all, define the sequence of rational numbers $\{b_n\}$, for every $n\ge 0$,   
by  their  ordinary generating function
\[
\sum_{n\ge 0}b_n x^n=\frac{x}{e^x-1}\;.
\]
Notice that $b_n=B_n n!$ where the $B_n$ are the  Bernoulli numbers. 
In particular, the only non trivial odd term of the sequence is $b_1=-\frac{1}{2}$ 
and we have:
\[ b_0=1, \quad b_2=\frac{1}{12},\quad b_4=-\frac{1}{720},\quad\ldots \, .\]

\begin{definition}  Given a poset $A$ and a posetted tree $(\Gamma,f)\in \mathcal{T}(A)$,
let us define
\[
b_{(\Gamma,f)} := \prod_{v \in R(\Gamma)} \frac{b_{d(v)}}{t(v)},
\]
where the $b_n$'s are  the  rational numbers above and, for every subroot 
$v\in R(\Gamma)$, we have
\[ t(v)=\text{ number of leaves $u\in L(\Gamma)$ such that $u\to v$ and $f(u)=f(m(v))$}.\]
We remind that $m(v)$ is the leaf lying on the rightmost branch containing $v$ (Definition 
\ref{def.rigthmost m(v) d(v)}).

\end{definition}

\begin{example}
Let $A=\{ b \leq a\}$ and consider the posetted tree
\begin{center}
\begin{picture}(300,60)
\unitlength=0.30mm
\put(110,30){$(\Gamma,f):$}
\letvertex AA=(200,60)\letvertex BB=(185,35)
\letvertex CC=(170,10)\letvertex DD=(200,10)
\letvertex EE=(230,10)\letvertex FF=(215,35)

\drawundirectededge(AA,CC){}\drawundirectededge(AA,EE){}
\drawundirectededge(BB,DD){}

\drawvertex(AA){$\bullet$}\drawvertex(BB){$\bullet$}
\drawvertex(CC){$\circ$}\drawvertex(DD){$\scriptscriptstyle{\otimes}$}
\drawvertex(EE){$\scriptscriptstyle{\otimes}$}

 \put(175,35){$\scriptstyle{u}$}\put(192,59){$\scriptstyle{v}$}
\put(168,0){$\scriptstyle{b}$}\put(198,0){$\scriptstyle{a}$} \put(228,0){$\scriptstyle{a}$}
\end{picture}
\end{center}
Here,  we have    $d(u)=d(v)=1; \  t(u)=1; \  t(v)=2$; therefore,
$
b_{(\Gamma,f)}=\dfrac{b_1}{ 1} \cdot \dfrac{b_1}{ \,2}=\dfrac{1}{8}.
$
\end{example}

\begin{theorem}\label{thm.bchtrees}
Let $L$ be a Lie algebra as above; then,
for every positive integer $k$ and every  $a_1, \ldots , a_k \in L$, we have
\begin{equation}\label{equ.bchperk}
a_k\bullet a_{k-1} \bullet \cdots \bullet a_1 =\sum_{(\Gamma,f) \, \in \, \mathcal{B}\,(a_1 \leq a_2 \leq  \cdots \leq a_k)} b_{(\Gamma,f)} Z_\Gamma(f),
\end{equation}
\begin{equation}\label{equ.bchperkbis}
a_1\bullet a_{2} \bullet \cdots \bullet a_k =\sum_{n=1}^{+\infty}(-1)^{n-1}\sum_{(\Gamma,f) \, \in \, \mathcal{B}_n\,(a_1 \leq a_2 \leq  \cdots \leq a_k)} b_{(\Gamma,f)} Z_\Gamma(f).
\end{equation}
In particluar, for $a,b\in L$, we have
\begin{equation}\label{equ.bchperdue}
a\bullet b=\sum_{(\Gamma,f)  \in \mathcal{B}(b \leq a)} b_{(\Gamma,f)} Z_\Gamma(f).
\end{equation}
\end{theorem}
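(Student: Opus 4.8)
The plan is to prove the central identity \eqref{equ.bchperk} by induction on $k$, and to obtain \eqref{equ.bchperdue} and \eqref{equ.bchperkbis} as formal consequences. Write $Q=a_{k-1}\bullet\cdots\bullet a_1$ and introduce a scaling parameter $s$. On the analytic side I would study the curve
\[ R(s)=\log(e^{s a_k}e^{Q})=(s a_k)\bullet Q,\qquad R(0)=Q,\quad R(1)=a_k\bullet a_{k-1}\bullet\cdots\bullet a_1, \]
and on the combinatorial side the element $T(s)$ obtained from the right-hand sum of \eqref{equ.bchperk} by replacing the top label $a_k$ with $s a_k$. Since $Z_\Gamma$ is multilinear in the leaves, $T(s)=\sum_{(\Gamma,f)}b_{(\Gamma,f)}\,s^{e(\Gamma,f)}Z_\Gamma(f)$, where $e(\Gamma,f)$ is the number of leaves labelled $a_k$. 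The goal is to show that $R$ and $T$ solve the same first order differential equation with the same initial value, and then to conclude by uniqueness of the solution, which holds term by term in the lower central filtration since $L$ is complete.

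\textbf{The analytic equation.} Differentiating $e^{R(s)}=e^{s a_k}e^{Q}$ gives $\frac{d}{ds}e^{R}=a_k e^{R}$. Combining this with the standard formula $\frac{d}{ds}e^{R}=\bigl(\frac{e^{\ad R}-1}{\ad R}R'\bigr)e^{R}$ and using $\frac{x}{e^x-1}=\sum_{n\ge 0}b_n x^n$, one obtains
\[ R'(s)=\frac{\ad R(s)}{e^{\ad R(s)}-1}(a_k)=\sum_{n\ge 0}b_n\,(\ad R(s))^{n}(a_k). \]

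\textbf{The combinatorial equation (the main point).} For each $(\Gamma,f)$ carrying at least one $a_k$-leaf, monotonicity forces the unique $\preceq$-maximal leaf $m(r)$ of the root $r$ to be labelled $a_k$. Setting $n=d(r)$, the rightmost branch from $r$ meets $n$ left-hanging binary subtrees $\Gamma_1,\dots,\Gamma_n$ and ends at $m(r)$, so that $Z_\Gamma(f)=[Z_{\Gamma_1},[\cdots[Z_{\Gamma_n},a_k]\cdots]]$ and $b_{(\Gamma,f)}=\frac{b_n}{t(r)}\prod_{i=1}^n b_{(\Gamma_i,f_i)}$, the remaining subroots lying inside the $\Gamma_i$ with their statistics $d,m,t$ computed there. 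This decomposition is a bijection onto $n$-tuples of posetted trees and respects monotonicity, and it identifies the $s^{e-1}Z_\Gamma(f)$-component of $\sum_{n}b_n(\ad T)^n(a_k)$ (the $n=0$ term corresponding to the one-leaf tree $a_k$) with $t(r)\,b_{(\Gamma,f)}$; meanwhile the same component of $T'(s)$ equals $e(\Gamma,f)\,b_{(\Gamma,f)}$. The two coefficients coincide precisely because $r$ is the root: every leaf $u$ satisfies $u\to r$, so $t(r)$ counts all leaves with label $f(m(r))=a_k$, i.e.\ $t(r)=e(\Gamma,f)$. Thus $T'(s)=\sum_{n\ge 0}b_n(\ad T(s))^n(a_k)$, the same equation as for $R$. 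I expect this cancellation $t(\mathrm{root})=e(\Gamma,f)$, together with the verification that the root-decomposition preserves the weights and the monotone structure, to be the crux of the proof; the rest is formal.

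\textbf{Conclusion.} At $s=0$ only the trees with no $a_k$-leaf survive, with unchanged weights, so $T(0)$ is the right-hand side of \eqref{equ.bchperk} for $a_1\le\cdots\le a_{k-1}$, which by the inductive hypothesis equals $Q=R(0)$; uniqueness then gives $T(1)=R(1)$, i.e.\ \eqref{equ.bchperk}. The base case $k=1$ is immediate, since every bracket in a single letter vanishes and only the one-leaf tree contributes. Finally \eqref{equ.bchperdue} is the case $k=2$, and \eqref{equ.bchperkbis} follows by applying \eqref{equ.bchperk} to $(-a_k)\bullet\cdots\bullet(-a_1)$, using the elementary identity $a_1\bullet\cdots\bullet a_k=-\bigl((-a_k)\bullet\cdots\bullet(-a_1)\bigr)$ (take inverses inside the logarithm) and the fact that negating all $n$ leaf labels multiplies $Z_\Gamma(f)$ by $(-1)^n$ for $(\Gamma,f)\in\mathcal B_n$, which produces the sign $(-1)^{n-1}$.
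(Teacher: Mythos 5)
Your proof is correct, and at its core it runs on the same two engines as the paper's: the classical differential/recursive description of $\log(e^{sa}e^{b})$ governed by the generating function $x/(e^x-1)$, and the bijection obtained by cutting a posetted tree along the rightmost branch of its root into the left-hanging subtrees $\Gamma_1,\dots,\Gamma_n$ --- which is exactly the paper's grafting of elements of $\mathcal{C}_{i_1},\dots,\mathcal{C}_{i_m}$ onto the Bernoulli tree $\Omega_m$. Your cancellation $t(\mathrm{root})=e(\Gamma,f)$ is precisely what matches the factor $1/(r+1)$ in the paper's recursion for $Z_{r+1}$ against the factor $1/t(v)$ at the root in $b_{(\Gamma,f)}$; the paper buries this in the ``easy induction on $r$'', so it is good that you made it explicit, since it is the crux. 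The packaging, however, genuinely differs. You run a single induction on $k$, proving \eqref{equ.bchperk} directly from the ODE for $R(s)=(sa_k)\bullet Q$ and uniqueness of formal solutions; the paper first proves the case $k=2$ (via the coefficientwise form of the same ODE, imported from Duistermaat--Kolk/Hall, with an induction on the number of $a$-labelled leaves over the auxiliary set $\mathcal{C}(b\le a)$ of trees whose local rightmost leaves carry $a$), and then bootstraps to general $k$ by a second grafting argument that substitutes $b=a_{k-1}\bullet\cdots\bullet a_1$ and expands the $b$-labelled leaves into trees of $\mathcal{B}(a_1\le\cdots\le a_{k-1})$. Your route never needs the vanishing $Z_\Gamma(f)=0$ outside $\mathcal{C}(b\le a)$, because the root decomposition applies to every tree carrying an $a_k$-leaf; the price is the (correctly handled) verification that the decomposition respects monotonicity over the full poset, which reduces to $m(\mathrm{root})$ being the maximum of $(L(\Gamma),\preceq)$. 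The derivation of \eqref{equ.bchperkbis} by the sign trick is identical to the paper's. Both arguments are sound; yours is slightly more self-contained, while the paper's two-stage version isolates the $k=2$ formula that most readers will actually use.
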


\begin{proof}  Let us first prove Formula \eqref{equ.bchperdue}.
Let  $\mathcal{C}'\,(b\leq a)\subset \mathcal{B}\,(b\leq a)$ be the subset of posetted trees having every local rightmost leaf labelled with $a$ and denote by $\mathcal{C}\,(b\leq a)= \mathcal{C}'\,(b\leq a)\cup \mathcal{B}_1(b)$.

Since the bracket is skewsymmetric, we have that $Z_\Gamma(f)=0$, for every 
$(\Gamma,f)\notin\mathcal{C}\,(b\leq a)$; therefore,
\[  \sum_{(\Gamma,f)  \in \mathcal{B}(b \leq a)} b_{(\Gamma,f)} Z_\Gamma(f)=
\sum_{(\Gamma,f)  \in \mathcal{C}(b\leq a)} b_{(\Gamma,f)} Z_\Gamma(f).\]
In \cite[Theorem. 1.6.1]{DK} and \cite{hall}, the following recursive formula for the   Baker-Campbell-Hausdorff product  is proved:
\[a\bullet b=\sum_{r\ge 0}Z_r,\]
where
\[Z_0=b,\qquad
Z_{r+1}=\frac{1}{r+1}\sum_{m\ge 0}b_m \sum_{i_1+\cdots+i_m=r}(\ad Z_{i_1})
(\ad Z_{i_2})\cdots(\ad Z_{i_m})a,\quad \text{for }r\ge 0.\]
For every $r>0$, let
$\mathcal{C}_r\subset \mathcal{C}(b\leq a)$ be the subset of posetted trees with exactly $r$
leaves labelled with $a$; we prove  that, for every $r\ge 0$, we have
\begin{equation}\label{equ.formulazetaerre}
 Z_r=\sum_{(\Gamma,f)  \in \mathcal{C}_r} b_{(\Gamma,f)} Z_\Gamma(f).
 \end{equation}
This is clear for $r=0$;  for $r=1$, we have
\[ Z_{1}=\sum_{m\ge 0}b_m(\ad Z_{0})^m
a=\sum_{m\ge 0}b_m(\ad b)^m a,\]
whereas $\mathcal{C}_1=\{\Omega_m\}$, $m\ge 0$, is the set of posetted trees of 
Bernoulli type  \cite{torossian}, i.e.,
\begin{center}
\begin{picture}(290,30)
\unitlength=0.30mm
 \letvertex A=(105,45) \letvertex B=(135,40)
 \letvertex C=(165,35) \letvertex D=(195,30)
 \letvertex E=(225,25)
 \letvertex F=(255,20)
 \letvertex a=(105,5) \letvertex b=(135,5)
 \letvertex c=(165,5) \letvertex d=(195,5)
 \letvertex e=(225,5)
 \letvertex f=(255,5) \letvertex g=(285,5)

\put(60,15){$\Omega_m:$}

\drawundirectededge(A,B){}
\drawundirectededge(B,C){}
\drawundirectededge(C,D){}
\drawundirectededge(D,E){}

\drawundirectededge(F,g){}
\drawundirectededge(A,a){} \drawundirectededge(F,f){}
\drawundirectededge(B,b){}

\drawundirectededge(E,e){}
\drawundirectededge(C,c){}\drawundirectededge(D,d){}

 \dashline[0]{2}(228,25)(252,20)

\drawvertex(A){$\bullet$}\drawvertex(B){$\circ$}
\drawvertex(C){$\circ$}\drawvertex(D){$\circ$}
\drawvertex(E){$\circ$}\drawvertex(F){$\circ$}
\drawvertex(a){$\circ$}\drawvertex(c){$\circ$}
\drawvertex(b){$\circ$}\drawvertex(d){$\circ$}
\drawvertex(e){$\circ$}\drawvertex(f){$\circ$}
\drawvertex(g){$\scriptscriptstyle\otimes$}

\put(103,-5){$\scriptstyle{b}$} \put(133,-5){$\scriptstyle{b}$} 
\put(163,-5){$\scriptstyle{b}$} \put(193,-5){$\scriptstyle{b}$}
 \put(223,-5){$\scriptstyle{b}$}
\put(253,-5){$\scriptstyle{b}$} \put(283,-5){$\scriptstyle{a}$}

\end{picture}
\end{center}
where $m$ is the number of leaves labelled with $b$. Therefore, the coefficient 
 $b_{(\Omega_m)}$ is exactly 
$b_m$ and so
 \[ Z_1=\sum_{(\Gamma,f)  \in \mathcal{C}_1} b_{(\Gamma,f)} Z_\Gamma(f).\]

\noindent Moreover, every element of $\mathcal{C}_{r+1}$  is obtained in a unique way starting 
from a tree $\Omega_m$ and grafting, at each of the $m$ leaves labelled with $b$, 
the roots of elements of $\mathcal{C}_{i_1},\ldots,\mathcal{C}_{i_m}$, with $i_1+\cdots+i_m=r$ 
(for the definition of the grafting see \cite[Definition 1.37]{operads}). 
Therefore, the proof of \eqref{equ.formulazetaerre} follows  easily by induction on $r$.

Next, since $\bullet$ is associative, we have
\[ a_1\bullet a_{2} \bullet \cdots \bullet a_k=-((-a_k)\bullet\cdots\bullet(-a_1)),\]
and Formula \eqref{equ.bchperkbis} follows immediately from \eqref{equ.bchperk}.
Finally, setting $b=a_{k-1} \bullet \cdots \bullet a_1$, we have that every posetted 
tree of $\mathcal{B}\,(a_1 \leq a_2 \leq  \cdots \leq a_k)$ can be described in a unique 
way as a posetted tree in $\mathcal{C}\,(b\leq a_k)$, where at every leaf labelled 
with $b$ is grafted the root of a posetted tree of
$\mathcal{B}\,(a_1 \leq a_2 \leq  \cdots \leq a_{k-1})$. In view of the associativity relation
\[a_k\bullet a_{k-1} \bullet \cdots \bullet a_1= a_k\bullet b,
\]
we obtain that
\eqref{equ.bchperk} is a consequence of $\displaystyle a_k\bullet b=\sum_{(\Gamma,f) 
 \in \mathcal{C}(b\le a_k)} b_{(\Gamma,f)} Z_\Gamma(f)$.

\end{proof}

\begin{remark}
Choose $a_1=b$ and  $a_2=a$ in Equation \eqref{equ.bchperk}, and  $a_1=a$ and $a_2=b$ in 
Equation  \eqref{equ.bchperkbis}. Comparing the coefficient of the product
 $ad(b)^n(a)$ in both equations, we obtain the following relations 
\begin{equation}\label{equazione bernoulli number}
(1+n(-1)^n)b_n=-\sum_{i=1}^{n-1}(-1)^ib_ib_{n-i},\qquad n>0.
\end{equation} 
 Indeed, the coefficient of  $ad(b)^n(a)$ in Equation  \eqref{equ.bchperkbis} 
comes from the Bernoulli tree $\Omega_n$ and so
it  is exactly $b_n$. On the other side, we need to consider the subset $S(n)$
  of trees ${(\Gamma,f)  
\in \mathcal{B}(a \leq b)} $ with only one subroot,  $n$ leaves labelled $b$ and one leaf labelled $a$. For any $ (\Gamma,f)  \in S(n)$,
we have $Z_\Gamma(f)=\pm ad(b)^n(a)$, and we can define $C_n$ as
\[ \sum_{(\Gamma,f)\in S(n)} b_{(\Gamma,f)} Z_\Gamma(f)=C_n ad(b)^n(a).\]
Comparing the coefficients, we have 
\[C_n=(-1)^n b_n.\]
Next, let us compute  $C_n$ recursively. There are two different types of  
contributions to $C_n$ due to the 
following graphs. 
The first contribution is due to the graph with only one subroot; 
in this case, the coefficient is 
$\dfrac{b_n}{n} ad(b)^{n-1}([a,b])=-\dfrac{b_n}{n} ad(b)^{n}(a).$
The other contribution is due to the graphs obtained   from a graph in $S(i)$, 
for every  $i=1,\ldots,n-1$, and grafting, at the leaves labelled with $a$, a graph  
of $S(n-i)$.
Therefore, for every fixed $i$, the coefficient is  
\[ \frac{b_i}{n}C_{n-i} ad(b)^{i-1}([ad(b)^{n-i}(a),b])=-\frac{b_i}{n}C_{n-i} ad(b)^{n}(a).\]
Summing up, we have
\[ C_n=-\dfrac{b_n}{n}-\sum_{i=1}^{n-1}\frac{b_i}{n}C_{n-i};\]
and, since $C_n=(-1)^n b_n$, we get the relation
\[b_n(1+n(-1)^n)=-\sum_{i=1}^{n-1}(-1)^ib_i b_{n-i}.\]

\smallskip 
Note that, in the previous computation, we have just used the fact that the product is 
associative and therefore apply for every  associative product defined by 
Equation \eqref{equ.bchperk}. 
More precisely, let $a_n$ be any sequence in $\K$, and for any $(\Gamma,f) \in \mathcal{B}(b\leq a)$,
define
\[
a_{(\Gamma,f)} := \prod_{v \in R(\Gamma)} \frac{a_{d(v)}}{t(v)},
\]
and the product 
\begin{equation}
a\ast b=\sum_{(\Gamma,f)  \in \mathcal{B}(b \leq a)} a_{(\Gamma,f)} Z_\Gamma(f).
\end{equation}

\end{remark}
\begin{proposition}
In the notation above, the  product $\ast$ is associative if and only if there exists
an  $h\in\K$ such that $a_n =h^n b_n$, for every $n>0$.
\end{proposition}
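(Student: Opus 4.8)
The plan is to prove both implications by comparing $\ast$ with the genuine Baker--Campbell--Hausdorff product $\bullet$, which by \eqref{equ.bchperdue} is exactly the case $a_n=b_n$ (that is, $h=1$), via the rescaling $x\mapsto hx$. The sufficiency will be an almost formal consequence of the associativity of $\bullet$, while the necessity will rest on the recursion already isolated in the preceding Remark, combined with an elementary induction.

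\emph{Sufficiency.} Suppose $a_n=h^nb_n$ for all $n>0$, and first take $h\neq0$. The two combinatorial inputs I would use are the identity $\sum_{v\in R(\Gamma)}d(v)=n-1$, valid for every $\Gamma\in\mathcal{B}_n$ (Section~\ref{sec.subroot}), and the homogeneity $Z_\Gamma(hf)=h^{\,n}Z_\Gamma(f)$ for a tree with $n$ leaves, which holds because $Z_\Gamma$ is an $n$-fold iterated bracket and hence $n$-linear in the leaf labels. These give, for $(\Gamma,f)\in\mathcal{B}_n(b\le a)$, the relation $a_{(\Gamma,f)}=\prod_{v\in R(\Gamma)}\frac{h^{d(v)}b_{d(v)}}{t(v)}=h^{\,n-1}b_{(\Gamma,f)}$, so that, applying \eqref{equ.bchperdue} to the pair $(ha,hb)$,
\[
a\ast b=\sum_n h^{\,n-1}\!\!\sum_{(\Gamma,f)\in\mathcal{B}_n(b\le a)}\!\! b_{(\Gamma,f)}Z_\Gamma(f)=h^{-1}\bigl((ha)\bullet(hb)\bigr).
\]
Since this yields $h(x\ast y)=(hx)\bullet(hy)$, associativity of $\ast$ is inherited from that of $\bullet$: both $(x\ast y)\ast z$ and $x\ast(y\ast z)$ equal $h^{-1}\bigl((hx)\bullet(hy)\bigr)\bullet(hz)=h^{-1}(hx)\bullet\bigl((hy)\bullet(hz)\bigr)$. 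The case $h=0$ is handled separately and is immediate: then $a_n=0$ for all $n>0$, so $a_{(\Gamma,f)}$ vanishes unless $R(\Gamma)=\emptyset$, i.e. unless $\Gamma$ has a single leaf, whence $a\ast b=a+b$, which is associative.

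\emph{Necessity.} Suppose $\ast$ is associative. I would first note that $0$ is a two-sided unit and $-x$ a two-sided inverse for $\ast$; both hold \emph{unconditionally}, since any iterated bracket of two or more elements, each a scalar multiple of one fixed element, vanishes, so only the single-leaf trees survive in $x\ast 0$, $0\ast x$ and $x\ast(-x)$. Thus $(L,\ast,0)$ is a group, and the group inversion formula $(a\ast b)^{-1}=b^{-1}\ast a^{-1}$ becomes the sign-reversal identity $a\ast b=-\bigl((-b)\ast(-a)\bigr)$. This supplies the second, sign-alternating expansion of $a\ast b$ that is needed to run the computation of the preceding Remark verbatim with $a_n$ in place of $b_n$: the purely combinatorial grafting relation $C_n=-\tfrac{a_n}{n}-\sum_{i=1}^{n-1}\tfrac{a_i}{n}C_{n-i}$ for the coefficient $C_n$ of $\ad(b)^n(a)$, together with $C_n=(-1)^na_n$ read off from the two expansions, yields
\[
(1+n(-1)^n)\,a_n=-\sum_{i=1}^{n-1}(-1)^ia_ia_{n-i},\qquad n>0.
\]
Now set $h:=-2a_1\in\K$, so that $a_1=hb_1$ since $b_1=-\tfrac12$. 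Multiplying the same recursion for $b_n$ by $h^{\,n}$ shows that $h^nb_n$ obeys this recursion as well; and for $n\ge2$ the leading coefficient $1+n(-1)^n$ is nonzero (it equals $1+n\ge3$ for $n$ even and $1-n\le-2$ for $n$ odd), so the recursion determines $a_n$ uniquely from $a_1,\dots,a_{n-1}$. Since $a_n$ and $h^nb_n$ agree at $n=1$ and satisfy the same uniquely solvable recursion, they coincide for every $n>0$.

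The step I expect to demand the most care is the passage from associativity to the recursion. The elementary verification that $0$ is a unit and $-x$ an inverse is what licenses the group inversion formula and hence the sign-reversal identity; without that identity one has only the combinatorial relation for $C_n$, which by itself does not close up. I would therefore spend most of the effort checking that the Remark's bookkeeping of $C_n$ — in particular the value $C_n=(-1)^na_n$ coming from equating the two expansions — goes through unchanged when $b_n$ is replaced by $a_n$. Everything downstream of the recursion, namely the inductive identification $a_n=h^nb_n$, and the entire sufficiency argument, is then routine.
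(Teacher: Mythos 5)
Your proof is correct and follows essentially the same route as the paper's: sufficiency via the rescaling identity $a\ast b=h^{-1}((ha)\bullet(hb))$ based on $\sum_{v\in R(\Gamma)}d(v)=n-1$, and necessity by transporting the recursion \eqref{equazione bernoulli number} of the preceding Remark to the sequence $a_n$ and solving it with $h=-2a_1$. The details you add beyond the paper's terse argument --- the separate $h=0$ case, the verification that $0$ is a unit and $-x$ an inverse so that the sign-reversal identity (the $k=2$ instance of \eqref{equ.bchperkbis} for $\ast$) is legitimate, and the explicit induction using that $1+n(-1)^n\neq 0$ for $n\ge 2$ --- are exactly the steps the paper leaves implicit, and they check out.
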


\begin{proof}
 One implication is clear, if $a_n =h^n b_n$, then 
\[
a\ast b=\sum_{(\Gamma,f)  \in \mathcal{B}(b \leq a)} a_{(\Gamma,f)} Z_\Gamma(f)= 
\sum_{(\Gamma,f) } \prod_{v \in R(\Gamma)}h^{d(v)} b_{(\Gamma,f)} Z_\Gamma(f)=
 h^{-1}((ha)\bullet (hb));
\]
this implies that the product $\ast$ is associative 
(in the last equality we use that $\sum_{v \in R(\Gamma)}d(v)=n-1$).
As regards the other implication, assume that the  product $\ast$ is associative; then,
Equation   \eqref{equ.bchperkbis}  holds for the   product $\ast$ instead of $\bullet$. 
Arguing as in the above remark,
we  conclude that the numbers $a_n$ must satisfy Equation~\eqref{equazione bernoulli number},
 and this easily implies that 
$a_n= (-2a_1)^n b_n$, for every $n>0$.

\end{proof}

\end{document}